\documentclass[runningheads]{llncs}

\usepackage{graphicx}

\usepackage[english]{babel}
\usepackage{  amssymb,amsmath,bbm,enumitem,stmaryrd,
		      tikz-cd,mathrsfs,extarrows
			}
\usepackage[colorlinks=false]{hyperref}
\usetikzlibrary{quotes,babel,angles,matrix,calc,patterns,graphs,3d,positioning,arrows.meta}
%
%



\DeclareMathOperator{\CM}{\mathbf{CM}}

\DeclareMathOperator{\dom}{dom}

\DeclareMathOperator{\Hom}{Hom}

\DeclareMathOperator{\id}{id}

\DeclareMathOperator{\pr}{\mathsf{pr}}
\DeclareMathOperator{\prbold}{\mathbf{pr}}

\DeclareMathOperator{\prt}{prt}

\DeclareMathOperator{\simto}{\rightarrowtriangle}

\DeclareMathOperator{\total}{tot}



\newcommand{\bbone}{\text{\usefont{U}{bbold}{m}{n}1}}
\MakeRobust{\bbone}
\newcommand{\bbtwo}{\text{\usefont{U}{bbold}{m}{n}2}}
\MakeRobust{\bbtwo}
\newcommand{\bbthree}{\text{\usefont{U}{bbold}{m}{n}3}}
\MakeRobust{\bbthree}
\newcommand{\bbfour}{\text{\usefont{U}{bbold}{m}{n}4}}
\MakeRobust{\bbfour}
\newcommand{\bbfive}{\text{\usefont{U}{bbold}{m}{n}5}}
\MakeRobust{\bbfive}
\newcommand{\bbsix}{\text{\usefont{U}{bbold}{m}{n}6}}
\MakeRobust{\bbsix}
\newcommand{\bbseven}{\text{\usefont{U}{bbold}{m}{n}7}}
\MakeRobust{\bbseven}
\newcommand{\bbeight}{\text{\usefont{U}{bbold}{m}{n}8}}
\MakeRobust{\bbeight}
\newcommand{\bbnine}{\text{\usefont{U}{bbold}{m}{n}9}}
\MakeRobust{\bbnine}
\newcommand{\bbnull}{\text{\usefont{U}{bbold}{m}{n}0}}
\MakeRobust{\bbnull}


\DeclareMathOperator{\Assemblies}{\mathcal{A}\mathit{sm}}

\DeclareMathOperator{\CompMod}{\mathsf{CompMod}}

\DeclareMathOperator{\Sets}{\mathsf{Sets}}
\DeclareMathOperator{\SetsB}{\mathbf{Sets}}

\newcommand{\Grothendieck}[2]{\sum_{#1} #2}


\tikzcdset{simto/.tip={Glyph[glyph math command=rightarrowtriangle]}}



\newcommand{\B}[1]{\mathbf{#1}}
\newcommand{\C}[1]{\mathcal{#1}}
\newcommand{\D}[1]{\mathbb{#1}}



\newcommand{\tot}{\mathrm{tot}}
\renewcommand{\fam}{\mathrm{fam}}

\title{The Grothendieck computability model}
\author{Luis Gambarte\inst{1} \and Iosif Petrakis\inst{2}}
\institute{Ludwig-Maximilians-Universit\"at M\"unchen \email{gambarte@math.lmu.de} \and Universit\`a di Verona \email{iosif.petrakis@univr.it}}

\begin{document}

\maketitle

\begin{abstract}
Translating notions and results from category theory to the theory of computability models of Longley and Normann, we introduce the Grothendieck computability model and the 
first-projection-simula\-{}tion. We prove some basic properties of the Grothendieck computability model, and we show that the category of computability models is a type-category, in the sense of Pitts. We introduce the notion of a fibration and opfibration-simulation, and we show that the first-projection-simulation is a split opfibration-simulation.
\end{abstract}

\section{Introduction}

The important role of category theory in computability theory has been emphasised by Cockett and Hofstra in~\cite{Co14,CH08,CH10}, who influenced the work of Longley on computability models and simulations between them in~\cite{Lo95,Lo07,Lo14}. The categorical notion of equivalence between computability models that is studied by Longley and Normann in~\cite{LN15} allowed a better way to ``identify'' seemingly different computability structures. By associating to a computability model $\B C$ its category of assemblies $\Assemblies(\B C)$, Longley and Normann established an equivalence of Morita-type between them. We can summarise the work of Longley and Normann by the phrase ``from computability models to categories''. 

In the previous work~\cite{Pe21,Pe22a,Pe22b} of the second author the converse direction i.e., ``from categories to computability models'', is followed. Given a category $\C C$ and a presheaf $S$ on $\C C$, the total computability model $\CM^{\tot}(\C C;S)$ was introduced, and if $\C C$ is a category with pullbacks and $S$ preserves pullbacks, the partial computability model $\CM^{\prt}(\C C;S)$ was studied. In our joint work in progress~\cite{GP24} the notion of a computability model over a category $\C C$ with a base of computability, a notion close to Rosolini's concept of dominion in~\cite{Ro86}, and a pullback-preserving presheaf on $\C C$ is elaborated, generalising in this way both constructions, that of $\CM^{\tot}(\C C;S)$ and of $\CM^{\prt}(\C C;S)$. Strict computability models are very close to categories of sets and partial functions, but avoiding the equality rules for composition of partial functions (as it is mentioned by Cockett in~\cite{Co14}, p.~16, ``program equality itself is not well-understood''), they possess a more expressive power than categories. Consequently, simulations, the arrows between computability models, avoid equality too, involving certain forcing and tracking relations instead.

Working within the direction ``from categories to computability models'' in this paper too, we ``translate'' the categorical Grothendieck construction and the categorical notion of split (op)fibration to the partial and without equality, or relational framework of computability models. The Grothendieck computability models become then the Sigma-objects, in the sense of Pitts~\cite{Pi00}, in the category of computability models.
We structure this paper as follows:
\begin{itemize}

\item In section~\ref{sec: basic} we include all basic definitions within the theory of computability models necessary to the rest of this paper. Crucial to the definition of the Grothendieck model is our introduction of the computability model $\SetsB$, the computability model-counterpart to the category of sets and functions (Definition~\ref{def: set}). The 
introduced representable-simulations correspond to the representable presheaves (Example~\ref{ex: presheafsim}).

\item In section~\ref{sec: groth} we define the Grothendieck computability model and the corresponding first-projection-simulation (Proposition~\ref{prp: groth1}). We prove some basic properties of the Grothendieck computability model, and we show that the category of computability models $\CompMod$ is a type-category, in the sense of Pitts~\cite{Pi00} (Theorem~\ref{thm: typecat}).

 \item In section~\ref{sec: fibr} we introduce the notion of a (split) fibration and opfibration-simulation and we show that the first-projection-simulation $\B {\pr_1} \colon \Grothendieck{\mathbf{C}}{\pmb{\gamma}} \simto \B C$ is a (split) opfibration-simulation (Proposition~\ref{prp: projopfibr} and Corollary~\ref{cor: split}).

\item In section~\ref{sec: concl} we include some questions and topics for future work.    
\end{itemize}
For all notions and results from category theory that are used here without explanation or proof we refer to~\cite{Ri16}. For various examples of computability models and simulations from higher-order computability theory we refer to \cite{LN15}.

\section{Basic definitions}
\label{sec: basic}


\begin{definition}
A $($strict$)$ computability model $\mathbf{C}$ consists of the following data: 
    a class $T$, whose members are called type names; 
    for each $t \in T$ a set $\mathbf{C}(t)$ of data types;
    for each $s,t \in T$ a class $\mathbf{C}[s,t]$ of computable functions, i.e., partial functions from $\mathbf{C}(s)$ to $\mathbf{C}(t)$.
Moreover, for every $r, s, t\in T$
the following hold: 
\begin{enumerate}
    \item The identity $\mathbf{1}_{\mathbf{C}(t)}$ is in $\mathbf{C}[t,t]$.
    \item For every $f \in \mathbf{C}[r,s]$ and $g \in \mathbf{C}[s,t]$ we have that $g \circ f \in \mathbf{C}[r,t]$.
\end{enumerate}
\end{definition}

Next, we describe the computability model of sets and partial functions $\SetsB$, as the computability model-analogue to the category of sets and functions $\Sets$.

\begin{definition}\label{def: set}
The computability model $\SetsB$ has as type names the class of sets and as data types the set $U$ itself, for every type name $U$. If $U, V$ are sets, the computable functions from $U$ to $V$ is the class of partial functions from $U$ to $V$.
\end{definition}

 A partial arrow $(i,f) \colon a \rightharpoonup b$ 
 in a category $\C C$ consists of a monomorphism $i \colon \dom(i) \to a$ and an arrow $f \colon \dom(i) \to b$ in $\C C$. Given a (covariant) presheaf $S \colon \C{C} \to \Sets$, we write $S(i,f)$ instead of $\big( S(i),S(f)\big)$. In~\cite{Pe22b} computability models over categories and presheaves on them were defined in a canonical way.


\begin{definition}\label{def: canonical}
 Let $\C C$ be a category and $S \colon \C C \to \Sets$ a presheaf on $\C C$.
    The total canonical computability model $\CM^{\total}(\C{C};S)$ over $\C C$ and $S$ has as type names the class of objects $\C C_0$ of $\C{C}$ and data types the sets $S(c)$, for every $c \in \C{C}_0$. If $c_1, c_2 \in \C C_0$, the (total) functions from $S(c_1)$ to $S(c_2)$ is the class
    $\{ S(f) ~\vert~ f \in \Hom(c_1, c_2)\}$.
The partial canonical computability model $\CM^{\prt}(\C{C};S)$ over $\C C$ and a pullback-preserving presheaf $S$ has the same type names and data types, while the partial functions from $S(c_1)$ to $S(c_2)$ is the class
    $\{ S(i,f) ~\vert~ (i,f) \colon c_1 \rightharpoonup c_2\}$. 
\end{definition}

The pullback-preserving property on $S$ is necessary to prove that $\CM^{\prt}(\C{C};S)$ is a computability model.
We can also use the category $\Sets^{\prt}$ of sets and partial functions, and the computability model  
$\CM^{\total}(\Sets^{\prt}, \id_{\Sets^{\prt}})$
    is the computability model $\SetsB$ of Definition~\ref{def: set}. 
Next, we describe the arrows in the category of computability models $\CompMod$. A notion of contravariant simulation can also be defined, allowing the  contravariant version of the Grothendieck construction for computability models.

\begin{definition}
\label{def: simulation}
    A simulation $\pmb{\gamma}$ from $\mathbf{C}$ $($over $T)$ to $\mathbf{D}$ $($over $U)$ consists of a class-function $\gamma \colon T \to U$ and a relation $\Vdash_t^\gamma \subseteq \B D\big(\gamma(t)\big) \times \B C(t)$ $($a so-called forcing relation$)$,  for each $t \in T$,
   subject to the following conditions:
    \begin{enumerate}
        \item For each $x \in \mathbf{C}(t)$ there exists some $y \in \B D(\gamma(t))$, such that $y \Vdash_t^\gamma x$.
        \item For each $f \in \mathbf{C}[s,t]$ there exists some $f' \in \mathbf{D}\big[\gamma(s),\gamma(t)\big]$ such that 
        \[ \forall_{x \in \mathbf{C}(s)} \forall_{y \in \mathbf{D}(\gamma(s))} \big(x \in \dom(f) \wedge y \Vdash_s^\gamma x \Rightarrow y \in \dom(f') \wedge f'(y) \Vdash_t^\gamma f(x)\big). \]
     
    \end{enumerate}
 In this case we say that $f'$ tracks $f$, and we write $f' \Vdash_{(s,t)}^\gamma f$. We also write $\pmb{\gamma} \colon \mathbf{C \simto D}$ for a simulation $\pmb{\gamma}$ from $\mathbf{C}$ to $\mathbf{D}$. We call a simulation $\pmb{\gamma} \colon \mathbf{C} \simto \SetsB$ a $($covariant$)$ presheaf-simulation. The identity simulation $\B 1_{\B C} \colon \B C \simto \B C$ is the pair 
$\big(\id_T, (\Vdash^{\B \iota_{\B C}}_{t})_{t \in T}\big)$, where $x{'} \Vdash^{\B \iota_{\B C}}_{t} x
:\Leftrightarrow x{'} = x$, for every $x{'}, x \in \B C(t)$. If $\pmb\delta \colon \B D \simto \B E$, the composite 
simulation $\pmb\delta \circ \pmb\gamma \colon \B C \simto \B E$ is the pair $\big(\delta \circ \gamma, 
(\Vdash^{\B \delta \circ \B \gamma}_t)_{t \in T}\big)$, where the relation $\Vdash^{\B \delta 
\circ \B \gamma}_{t} \subseteq \B E\big(\delta(\gamma(t))\big) \times \B C(t)$ is defined by
$$z \Vdash^{\B \delta \circ \B \gamma}_{t} x :\Leftrightarrow \exists_{y \in \B D(\gamma(t))}\big(z \Vdash^{\B \delta}_{\gamma(t)} y \ \wedge \ y \Vdash^{\B \gamma}_{t} x\big).$$
\end{definition}

The following presheaf-simulations on a computability model $\B C$ correspond to the representable functors $\Hom(a, -)$ over $a$ in a category $\C C$.

\begin{example}\label{ex: presheafsim}
Let $\B C$ be a \textit{locally-small} computability model over $T$, i.e., the class $\B C[s,t]$ of computable functions from $\B C(s)$ to $\B C(t)$ is a set, for every $s, t \in T$. If $t_0 \in T$, the representable-simulation $\pmb{\gamma}_{t_0} \colon \mathbf{C} \simto \SetsB$ consists of the class-function $\gamma_{t_0} \colon T \to \Sets$, defined by $\gamma_{t_0}(t) := \B C[t_0, t]$, for every $t \in T$, and the forcing relations $\Vdash_{t}^{\gamma_{t_0}} \subseteq \B C[t_0, t] \times \B C(t)$, defined by
\[f \Vdash_{t}^{\gamma_{t_0}} x : \Leftrightarrow \exists_{y \in \dom(f)}\big(f(y) = x\big).\]
To show that $\pmb{\gamma}_{t_0}$ is a simulation, we also need to suppose that $\B C$ is \textit{left-regular} i.e., $\forall_{t \in T}\forall_{x \in \B C(t)}\exists_{f \in \B C[t_0, t]}\exists_{y \in \dom(f)}\big(f(y) = x\big)$. All computability models that include the constant functions are left-regular (such as Kleene's first model $K_1$ over $T = \{0\}$ with $\B C(0) = \D N$, and $\B C[0, 0]$ the Turing-computable partial functions from $\D N$ to $\D N$). If $f \in \B C[s,t]$, it is easy to show that $f^* \Vdash_{(s,t)}^{\gamma_{t_0}} f$, where $f^*$ is the total function from $\B C[t_0, s]$ to $\B C[t_0, t]$, defined by 
$f^*(g) := f \circ g$, for every $g \in \B C[t_0, s]$. A \textit{right-regularity} condition on a locally-small computability model is 
needed, to define the 
contravariant representable-simulations $\B {\delta}_{t_0} \colon \B C \simto \SetsB$, where ${\delta}_{t_0} \colon \B C \to \Sets$ is defined by $\delta_{t_0}(t) := \B C[t, t_0]$, for every $t \in T$.
\end{example}

\section{The Grothendieck computability model}
\label{sec: groth}

The Grothendieck computability model is the categorical counterpart to the category of elements, a special case of the general categorical Grothendieck construction. A category $\C C$ is replaced by a computability model $\B C$, and a (covariant) presheaf $S \colon \C C \to \Sets$ by a (covariant) simulation $\pmb{\gamma}\colon \mathbf{C} \simto \SetsB$. Moreover, the first-projection functor is replaced by the first-projection-simulation.

\begin{proposition}
\label{prp: groth1}
Let $\mathbf{C}$ be a computability model over the class $T$ together with a simulation $\pmb{\gamma}\colon \mathbf{C} \simto \SetsB$.
The structure $\Grothendieck{\mathbf{C}}{\pmb{\gamma}}$ with type names the class
	\[ \Grothendieck{t \in T}{\pmb{\gamma}(t)} := \big\{(t,b) ~\vert~ t \in T \hbox{ and }b \in \gamma(t) \big\}, \] 
 with data types, for every $(t,b) \in \Grothendieck{t \in T}{\pmb{\gamma}(t)}$, the sets
	\[ \Big(\Grothendieck{\mathbf{C}}{\pmb{\gamma}}\Big)(t,b) := \big\{y \in \mathbf{C}(t) ~\vert~ b \Vdash_t^\gamma y\big\}, \]
 and computable functions from $\Big(\Grothendieck{\mathbf{C}}{\pmb{\gamma}}\Big)(s,a)$ to $\Big(\Grothendieck{\mathbf{C}}{\pmb{\gamma}}\Big)(t,b)$ the classes
 \[\Big\{ f \in \mathbf{C}[s,t] ~\vert~ \forall_{x \in \dom(f)}\Big( x \in  \Big(\Grothendieck{\mathbf{C}}{\pmb{\gamma}}\Big)(s,a) \Rightarrow 
 f(x) \in \Big(\Grothendieck{\mathbf{C}}{\pmb{\gamma}}\Big)(t,b)\Big)\Big\}, \]
is a computability model. The class-function $\pr_1 \colon \Grothendieck{t \in T}{\pmb{\gamma}(t)} \to T$, defined by the rule $(t,b) \mapsto t$, and the 
forcing relations, defined, for every $(t,b) \in \Grothendieck{t \in T}{\pmb{\gamma}(t)}$, by 
 \[ y{'} \Vdash_{(t,b)}^{\pr_1} y :\Leftrightarrow y{'} = y, \]
determine the first-projection-simulation $\prbold_1 \colon \Grothendieck{\mathbf{C}}{\pmb{\gamma}} \simto \mathbf{C}$.  
       
\end{proposition}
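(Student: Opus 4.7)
The statement splits into two parts that I would prove in sequence. First, I would verify that $\Grothendieck{\mathbf{C}}{\pmb{\gamma}}$ satisfies the axioms of a computability model. The data types $\big(\Grothendieck{\mathbf{C}}{\pmb{\gamma}}\big)(t,b)$ are sets, being subsets of $\B C(t)$, and the stated computable functions are realised by elements of $\B C[s,t]$ viewed as partial functions between the Grothendieck data types via restriction. The identity axiom is immediate: $\B 1_{\B C(t)} \in \B C[t,t]$ preserves every subset of $\B C(t)$, hence trivially satisfies the restriction condition for $(t,b) \to (t,b)$. Composition amounts to applying the restriction conditions on $f \in \big(\Grothendieck{\mathbf{C}}{\pmb{\gamma}}\big)[(r,a),(s,b)]$ and $g \in \big(\Grothendieck{\mathbf{C}}{\pmb{\gamma}}\big)[(s,b),(t,c)]$ in turn to an arbitrary element of $\dom(g \circ f) \cap \big(\Grothendieck{\mathbf{C}}{\pmb{\gamma}}\big)(r,a)$, since composition in $\B C$ already ensures $g \circ f \in \B C[r,t]$.

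Second, I would verify that $\prbold_1$ is a simulation. The forcing relation is well-typed because $\big(\Grothendieck{\mathbf{C}}{\pmb{\gamma}}\big)(t,b) \subseteq \B C(t) = \B C(\pr_1(t,b))$, so equality between such elements makes sense. Condition (1) of Definition~\ref{def: simulation} is met by taking $y{'} := y$ for any $y \in \big(\Grothendieck{\mathbf{C}}{\pmb{\gamma}}\big)(t,b)$. For condition (2), given $f \in \big(\Grothendieck{\mathbf{C}}{\pmb{\gamma}}\big)[(s,a),(t,b)]$, I would take $f{'} := f$, which is by definition already an element of $\B C[s,t]$; whenever $x \in \dom(f)$ and $y \Vdash_{(s,a)}^{\pr_1} x$, i.e.\ $y = x$, we get $y \in \dom(f{'})$ and $f{'}(y) = f(x)$, so $f{'}(y) \Vdash_{(t,b)}^{\pr_1} f(x)$ tautologically.

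I do not anticipate a substantive obstacle, since this is a direct translation to computability models of the category-of-elements construction together with its first projection. The only point requiring care is bookkeeping the two different universes of ``computable function'' in play: elements of $\B C[s,t]$ themselves versus their restrictions to the Grothendieck data types. Making this restriction interpretation explicit is exactly what renders both the composition axiom for $\Grothendieck{\mathbf{C}}{\pmb{\gamma}}$ and the tracking property for $\prbold_1$ automatic from the data of $\B C$ alone, with no appeal needed to the values of the forcing relations $\Vdash_t^\gamma$ beyond their definitional role in cutting out the subsets $\big(\Grothendieck{\mathbf{C}}{\pmb{\gamma}}\big)(t,b)$.
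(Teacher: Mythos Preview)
Your proposal is correct and follows essentially the same route as the paper: verify the identity and composition axioms for $\Grothendieck{\mathbf{C}}{\pmb{\gamma}}$ by chaining the defining implication $a \Vdash_s^\gamma x \Rightarrow b \Vdash_t^\gamma f(x)$, and then check that $\prbold_1$ is a simulation by letting each $y$ force itself and each $f$ track itself. Your remark about bookkeeping the two readings of an element of $\B C[s,t]$ (as a partial map $\B C(s) \rightharpoonup \B C(t)$ versus its restriction to the Grothendieck data types) is exactly the one piece of care the paper leaves implicit.
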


\begin{proof}
We show that the computable functions include the identities and are closed under composition. Notice that the defining property of the computable functions in the Grothendieck model is equivalent to the condition 
$a \Vdash_s^\gamma x \Rightarrow b \Vdash_t^\gamma f(x)$, for every $x \in \dom(f)$. If $(t,b) \in \Grothendieck{t \in T}{\pmb{\gamma}(t)}$, then the identity on $\Grothendieck{\mathbf{C}}{\pmb{\gamma}}(t,b)$ is the identity on $\mathbf{C}(t)$, i.e., $\mathbf{1}_{\mathbf{C}(t)}$ is a computable function from $\Big(\Grothendieck{\mathbf{C}}{\pmb{\gamma}}\Big)(t,b)$ to itself: if $x \in \mathbf{C}(t)$, then the implication 
$b \Vdash_t^\gamma x \Rightarrow b \Vdash_t^\gamma x$ holds trivially.
If $g$ is a computable function from $\Grothendieck{\mathbf{C}}{\pmb{\gamma}}(t,b)$ to $\Grothendieck{\mathbf{C}}{\pmb{\gamma}}(u,c)$ and $f$ is a computable function from $\Grothendieck{\mathbf{C}}{\pmb{\gamma}}(s,a)$ to 
$\Grothendieck{\mathbf{C}}{\pmb{\gamma}}(t,b)$, then $g \circ f$ is a computable function from $\Grothendieck{\mathbf{C}}{\pmb{\gamma}}(s,a)$ to 
$\Grothendieck{\mathbf{C}}{\pmb{\gamma}}(u,c)$.
For that, let 
$x \in \dom(f)$ and $f(x) \in \dom(g)$. If $a \Vdash_s^\gamma x$, then 
$b \Vdash_t^\gamma f(x)$, and hence $c \Vdash_u^\gamma g(f(x))$.
Next, we show that $\prbold_1$ is a simulation.
If $y \in \Grothendieck{\mathbf{C}}{\pmb{\gamma}}(t,b)$, then $x \Vdash_{(t,b)}^{\pr_1} x$, and if  
    $f$ is a computable function from $\Grothendieck{\mathbf{C}}{\pmb{\gamma}}(s,a)$ to $\Grothendieck{\mathbf{C}}{\pmb{\gamma}}(t,b)$,
then $f \Vdash_{((s,a),(t,b))}^{\pr_1}f$.
\end{proof}


The following fact is straightforward to prove.

\begin{proposition}
Let $\mathcal{C}$ be a category and $S \colon \mathcal{C} \to \Sets$ a pullback-preserving presheaf on $\C C$. Let $\gamma^S \colon \mathcal{C}_0 \simto \SetsB$ be defined via $\gamma^S(c) = S(c)$ and the relations $\Vdash_c^{\gamma^S}$ are simply the diagonal, and let 
$\{\pr_2\} \colon \Grothendieck{\mathcal{C}}{S} \to \Sets $ be defined by $\{\pr_2\}(c,x) := \{x\}$ and if $f \colon (c,x) \to (d,y)$ in 
$\Grothendieck{\mathcal{C}}{S}$, let $[S(f)](x) := y$. Then
    \[ \Grothendieck{\CM^{\prt}(\mathcal{C};S)}{\pmb{\gamma}^S} = \CM^{\prt}\Big( \Grothendieck{\mathcal{C}}{S};\{\pr_2\}\Big).
    \]
\end{proposition}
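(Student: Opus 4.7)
The plan is to verify the claimed equality of computability models componentwise: the class of type names, the data type at each type name, and the class of computable functions at each pair of type names. The first two are routine unwindings of the two constructions; the substantive step is matching the computable functions, which relies on identifying partial arrows in $\Grothendieck{\mathcal{C}}{S}$ with suitably-witnessed partial arrows in $\mathcal{C}$. A preliminary observation is that the presheaf $\{\pr_2\}$ trivially preserves pullbacks, since it takes values in singletons, so the right-hand side is a well-defined partial canonical computability model in the sense of Definition~\ref{def: canonical}.

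For type names, both sides unfold to $\{(c, x) \mid c \in \mathcal{C}_0,\ x \in S(c)\}$: on the left because $\gamma^S(c) = S(c)$, on the right because this is by definition the object class of the category of elements $\Grothendieck{\mathcal{C}}{S}$. For the data type at a fixed type name $(c, x)$, the left-hand side yields $\{y \in S(c) \mid x \Vdash_c^{\gamma^S} y\}$, which collapses to $\{x\}$ because $\Vdash^{\gamma^S}$ is stipulated to be the diagonal; the right-hand side yields $\{\pr_2\}(c, x) = \{x\}$ by definition, so these agree.

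The principal step is to identify the computable functions from $(c_1, x_1)$ to $(c_2, x_2)$ on both sides. On the left-hand side these are the partial functions $f = S(i, h)$ associated to partial arrows $(i, h) \colon c_1 \rightharpoonup c_2$ in $\mathcal{C}$ satisfying the Grothendieck side-condition of Proposition~\ref{prp: groth1}; unfolding the diagonal forcing relation of $\pmb{\gamma}^S$, this reduces to the implication: whenever $x_1 \in \dom(f)$, necessarily $f(x_1) = x_2$. On the right-hand side they are the partial functions $\{\pr_2\}(i', h')$ indexed by partial arrows $(i', h') \colon (c_1, x_1) \rightharpoonup (c_2, x_2)$ in $\Grothendieck{\mathcal{C}}{S}$. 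Unpacking such a partial arrow, it consists of a mono $i' \colon (c, z) \to (c_1, x_1)$ paired with an arrow $h' \colon (c, z) \to (c_2, x_2)$, i.e., a partial arrow $(i', h') \colon c_1 \rightharpoonup c_2$ in $\mathcal{C}$ together with a witness $z \in S(\dom(i'))$ satisfying $S(i')(z) = x_1$ and $S(h')(z) = x_2$. I would match each such witnessed partial arrow on the right with its underlying $S(i', h')$ on the left: the witness $z$ certifies $x_1 \in \dom(S(i', h'))$ and $S(i', h')(x_1) = x_2$, precisely the Grothendieck side-condition.

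The main obstacle is making this correspondence between partial arrows precise, which reduces to clarifying the monomorphisms in $\Grothendieck{\mathcal{C}}{S}$: the first projection $\Grothendieck{\mathcal{C}}{S} \to \mathcal{C}$ is faithful and hence reflects monos, and, conversely, a mono $i \colon c \to c_1$ in $\mathcal{C}$ lifts to monos $(c, z) \to (c_1, S(i)(z))$ in $\Grothendieck{\mathcal{C}}{S}$ for any $z \in S(c)$. Combined with the pullback-preservation of $S$ (which ensures $S(i)$ is injective, making the lift-data $z$ unique when $x_1 \in \im(S(i))$), this yields the desired bijection and hence the equality of the two computability models.
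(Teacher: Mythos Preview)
The paper omits this proof entirely, declaring the fact ``straightforward to prove,'' so there is no detailed argument to compare against; your componentwise verification is precisely the approach the authors have in mind, and your treatment of type names and data types is correct.

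For the computable functions there is one direction you have not justified. You invoke faithfulness of the projection $\Grothendieck{\mathcal C}{S}\to\mathcal C$ to \emph{reflect} monos, and this indeed lets you lift a partial arrow $(i,h)$ in $\mathcal C$ (with a chosen preimage $z$ of $x_1$) to a partial arrow in $\Grothendieck{\mathcal C}{S}$. But the reverse passage---starting from a partial arrow $(i',h')\colon(c_1,x_1)\rightharpoonup(c_2,x_2)$ in $\Grothendieck{\mathcal C}{S}$ and reading off its ``underlying $S(i',h')$'' on the left---needs the underlying $i\colon c\to c_1$ to be monic in $\mathcal C$, i.e.\ needs the projection to \emph{preserve} monos. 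Faithful functors need not do this, and the category-of-elements projection can in fact fail to (let $S$ be empty over the test objects that would witness non-monicity). So your claimed bijection is only half established. A related symptom: at the level of the induced partial functions $\{x_1\}\rightharpoonup\{x_2\}$, every $\{\pr_2\}(i',h')$ on the right is the total map $x_1\mapsto x_2$, whereas on the left any $(i,h)$ with $x_1\notin\im S(i)$ yields the empty partial function after restriction---so the two sides do not obviously match even extensionally.

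None of this contradicts the paper: the authors are evidently reading the equality up to the obvious identifications, and at that level of rigour your outline is exactly what is wanted. If you want a watertight argument, drop the attempt at a bijection between indexing partial arrows and instead compare the resulting classes of partial functions $\{x_1\}\rightharpoonup\{x_2\}$ directly, making explicit which identification (restriction of the Grothendieck-model computable maps to the singleton data types) is in force.
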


\begin{remark}
\label{rem: asm}
 The functor $\B C \mapsto \Assemblies(\B C)$ studied in~\cite{LN15} does not ``preserve'' the Grothendieck construction. Namely, if $\B 1$ is a
terminal computability model with type names $\{\emptyset\}$, data type $\B 1(\emptyset) = \{\emptyset\}$, and as only computable function the identity, then one can define a presheaf $\id_{\B 1} \colon \B 1 \simto \SetsB$, and show that  
    \[ \Assemblies\bigg(\Grothendieck{\B 1}{\id_{\B 1}}\bigg) \neq \Grothendieck{\Assemblies{(\B 1)}}{\Assemblies(\id_{\B 1})}. \]
\end{remark}

Next we show that the category of computability models $\CompMod$ is a type-category, in the sense of Pitts~\cite{Pi00}, pp.~110-111, a reformulation of Cartmell's categories with attributes in~\cite{Ca86}.
These categories are what we call $(\fam, \Sigma)$-categories with a terminal object in~\cite{Pe23}, and serve as categorical models of dependent type systems. First, we lift a simulation $\pmb{\gamma} \colon \mathbf{C \simto D}$ to a simulation between the Grothendieck computability models $\Grothendieck{\mathbf{C}}{(\pmb{\delta} \circ \pmb{\gamma})}$ and $\Grothendieck{\mathbf{D}}{\pmb{\delta}}$. 

\begin{lemma}\label{lem: lift}
    Let $\mathbf{C,D}$ be computability models over the classes $T,U$ respectively, and $\pmb{\gamma} \colon \mathbf{C \simto D}, \pmb{\delta} \colon \mathbf{D} \simto \SetsB$ simulations. 
    There is a simulation $\Grothendieck{\pmb{\delta}}{\pmb{\gamma}} \colon \Grothendieck{\mathbf{C}}{(\pmb{\delta} \circ \pmb{\gamma})} \simto \Grothendieck{\mathbf{D}}{\pmb{\delta}}$, such that the following is a pullback square 
    \[
    \begin{tikzcd}
        \Grothendieck{\mathbf{C}}{(\pmb{\delta} \circ \pmb{\gamma})} \ar[r,"\Grothendieck{\pmb{\delta}}{\pmb{\gamma}}",-simto] \ar[d,"\prbold_1",-simto] & \Grothendieck{\mathbf{D}}{\pmb{\delta}} \ar[d,"\prbold_1",-simto]
        \\
        \mathbf{C} \ar[r,"\pmb{\gamma}",-simto] & \mathbf{D}.
    \end{tikzcd}
    \]
\end{lemma}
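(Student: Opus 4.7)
My plan is to imitate the categorical Grothendieck construction lifted by a natural transformation. First, I would define $\Grothendieck{\pmb{\delta}}{\pmb{\gamma}}$ on type names by $(t,c) \mapsto (\gamma(t),c)$; this is well typed because $c \in (\delta \circ \gamma)(t) = \delta(\gamma(t))$, so $(\gamma(t),c) \in \Grothendieck{u \in U}{\pmb{\delta}(u)}$. For the forcing, at each $(t,c) \in \Grothendieck{T}{(\pmb{\delta} \circ \pmb{\gamma})}$ I would set
\[ z \Vdash^{\Grothendieck{\delta}{\gamma}}_{(t,c)} y \;:\Leftrightarrow\; z \Vdash^{\gamma}_{t} y, \]
with $z \in \big(\Grothendieck{\mathbf{D}}{\pmb{\delta}}\big)(\gamma(t),c)$ and $y \in \big(\Grothendieck{\mathbf{C}}{(\pmb{\delta} \circ \pmb{\gamma})}\big)(t,c)$. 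Simulation axiom~(1) is immediate: unfolding $c \Vdash^{\delta \circ \gamma}_{t} y$ via the composite-simulation definition yields some $z \in \mathbf{D}(\gamma(t))$ with $c \Vdash^{\delta}_{\gamma(t)} z$ and $z \Vdash^{\gamma}_{t} y$, so such $z$ lies in the target data type and forces $y$ as required.

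For axiom~(2), given $f \in \big(\Grothendieck{\mathbf{C}}{(\pmb{\delta}\circ\pmb{\gamma})}\big)[(t,c),(t',c')]$, I would pick a tracker $f' \in \mathbf{D}[\gamma(t),\gamma(t')]$ of $f$ via $\pmb{\gamma}$ and propose $f'$ as the tracker in the lift. The tracking condition reduces to that of $\pmb{\gamma}$: given $y \in \dom(f)$ in the source data type and $z$ in the target data type with $z \Vdash^{\gamma}_{t} y$, the tracking of $f$ by $f'$ delivers $z \in \dom(f')$ and $f'(z) \Vdash^{\gamma}_{t'} f(y)$, which is exactly $f'(z) \Vdash^{\Grothendieck{\delta}{\gamma}}_{(t',c')} f(y)$. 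The main obstacle is verifying that $f'$ itself belongs to $\big(\Grothendieck{\mathbf{D}}{\pmb{\delta}}\big)[(\gamma(t),c),(\gamma(t'),c')]$, i.e.\ that $c \Vdash^{\delta}_{\gamma(t)} z$ entails $c' \Vdash^{\delta}_{\gamma(t')} f'(z)$ for each $z \in \dom(f')$. This must be squeezed out of the Grothendieck-computability hypothesis $c \Vdash^{\delta \circ \gamma}_{t} y \Rightarrow c' \Vdash^{\delta \circ \gamma}_{t'} f(y)$ together with the tracking of $f'$; if the initial tracker does not respect the $(c,c')$-labelling on the nose, one has to refine it to a suitable $\tilde{f}$ in $\mathbf{D}[\gamma(t),\gamma(t')]$, and this refinement is the only non-mechanical step.

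For the pullback claim, commutativity is a direct calculation: both $\prbold_{1} \circ \Grothendieck{\pmb{\delta}}{\pmb{\gamma}}$ and $\pmb{\gamma} \circ \prbold_{1}$ send $(t,c) \mapsto \gamma(t)$, and their composite forcings both collapse to $z \Vdash^{\gamma}_{t} y$ by the diagonal forcing of $\prbold_{1}$ combined with the definition of composition of simulations. For the universal property, given simulations $\pmb{\alpha} \colon \mathbf{E} \simto \Grothendieck{\mathbf{D}}{\pmb{\delta}}$ and $\pmb{\beta} \colon \mathbf{E} \simto \mathbf{C}$ with $\pmb{\gamma} \circ \pmb{\beta} = \prbold_{1} \circ \pmb{\alpha}$, the mediating simulation $\mathbf{E} \simto \Grothendieck{\mathbf{C}}{(\pmb{\delta} \circ \pmb{\gamma})}$ sends each type name $e$ of $\mathbf{E}$ to the pair $(\beta(e), \pr_{2}(\alpha(e)))$, with the second component lying in $\delta(\gamma(\beta(e)))$ precisely by commutativity; the forcing and trackers are inherited coordinatewise, and uniqueness is forced by the explicit form of the pullback.
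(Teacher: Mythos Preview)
Your construction of $\Grothendieck{\pmb{\delta}}{\pmb{\gamma}}$---class-function $(t,c)\mapsto(\gamma(t),c)$, forcing inherited from $\pmb{\gamma}$---is exactly the paper's, and your treatment of commutativity and of the universal property matches the paper's argument (with the names $\pmb{\alpha},\pmb{\beta}$ interchanged, which is immaterial).

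Where you go beyond the paper is in isolating the one real obstacle: for a $\pmb{\gamma}$-tracker $f'\in\mathbf{D}[\gamma(t),\gamma(t')]$ of $f$ to serve as a tracker for $\Grothendieck{\pmb{\delta}}{\pmb{\gamma}}$, it must first lie in $\big(\Grothendieck{\mathbf{D}}{\pmb{\delta}}\big)[(\gamma(t),c),(\gamma(t'),c')]$, i.e.\ $c\Vdash^{\delta}_{\gamma(t)} z$ must imply $c'\Vdash^{\delta}_{\gamma(t')} f'(z)$ for \emph{every} $z\in\dom(f')$. The paper dismisses this as ``straightforward''; you correctly flag it as the ``only non-mechanical step'' but do not carry it out, and the refinement to some $\tilde f$ that you propose need not exist. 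The required implication concerns all $z$ forced by $c$, including those $z$ that $\pmb{\gamma}$-force nothing in $\mathbf{C}(t)$, and the hypotheses on $f,\pmb{\gamma},\pmb{\delta}$ place no constraint on $f'$ at such $z$. Concretely: let $\mathbf{C}(t)=\{x\}$, $\mathbf{C}(t')=\{x'\}$, $\mathbf{C}[t,t']=\{f\}$ with $f(x)=x'$; let $\mathbf{D}(\gamma(t))=\{z_1,z_2\}$, $\mathbf{D}(\gamma(t'))=\{w_1,w_2\}$, $\mathbf{D}[\gamma(t),\gamma(t')]=\{f'\}$ with $f'(z_i)=w_i$; let only $z_1\Vdash^\gamma x$ and $w_1\Vdash^\gamma x'$; let $\delta(\gamma(t))=\{c\}$ with $c$ forcing both $z_i$, and $\delta(\gamma(t'))=\{c',d'\}$ with $c'\Vdash^\delta w_1$ only and $d'$ forcing both $w_i$. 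All simulation axioms hold (for $\pmb{\delta}$, the map $c\mapsto d'$ tracks $f'$), and $f$ lies in $\big(\Grothendieck{\mathbf{C}}{(\pmb{\delta}\circ\pmb{\gamma})}\big)[(t,c),(t',c')]$; yet $\big(\Grothendieck{\mathbf{D}}{\pmb{\delta}}\big)[(\gamma(t),c),(\gamma(t'),c')]=\emptyset$, since $f'(z_2)=w_2$ is not $\pmb{\delta}$-forced by $c'$. So your instinct that this is the crux is right, but the step cannot be completed as written; the paper's proof shares the same gap.
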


\begin{proof}
    To define 
    $\Grothendieck{\pmb{\delta}}{\pmb{\gamma}}$, let the 
underlying class-function $\Grothendieck{{\delta}}{{\gamma}} \colon \Grothendieck{t \in T}{\pmb{\gamma}(t)} \to \Grothendieck{u \in U}{\pmb{\delta}(u)}$ be defined by the rule $(t,b) \mapsto \big(\gamma(t), b)$. The corresponding forcing relations are defined by $x' \Vdash_{(t,b)}^{\Grothendieck{{\delta}}{{\gamma}}} x :\Leftrightarrow x' \Vdash_t^\gamma x.$ 
It is straightforward to show that $\Grothendieck{\pmb{\delta}}{\pmb{\gamma}}$ is a simulation. Next we show that the above square commutes. 
 On the underlying classes this is immediate as 
    \[ \pr_1\big(\Grothendieck{{\delta}}{{\gamma}}(t,b)\big) = \pr_1\big(\gamma(t)\big) = \gamma\big(\pr_1(t,b)\big).\]
    On the forcing relations we observe that if $x' \Vdash_{(t,b)}^{\pr_1 \circ \Grothendieck{{\delta}}{{\gamma}}} x$, then $x' \Vdash_{(t,b)}^{\Grothendieck{{\delta}}{{\gamma}}} x$, and thus $x' \Vdash_t^{\gamma} x$, which is also equivalent to $x'\Vdash_{(t,b)}^{\gamma \circ \pr_1} x$.
   Finally, we show the pullback property. Let a computability model $\mathbf{E}$ over a class $V$ with simulations $\pmb{\alpha},\pmb{\beta}$ be given, such that the following rectangle commutes
    \begin{equation} \begin{tikzcd}
        \mathbf{E} \ar[r,"\pmb{\beta}",-simto] \ar[d,"\pmb{\alpha}",-simto] & \Grothendieck{\mathbf{D}}{\pmb{\delta}} \ar[d,"\prbold_1",-simto]
        \\
        \mathbf{C} \ar[r,"\pmb{\gamma}",-simto] & \mathbf{D}.
        \end{tikzcd}  \label{PullbackGroth::eq::1}\end{equation}
    We find a unique simulation $\pmb{\zeta} \colon \mathbf{E} \simto \Grothendieck{\mathbf{C}}{(\pmb{\delta} \circ \pmb{\gamma})}$ such that both following triangles 
    \[\begin{tikzcd}
        \mathbf{E} \ar[dr,"\pmb{\zeta}",-simto] \ar[ddr,"\pmb{\alpha}",bend right = 30,-simto] \ar[drr,"\pmb{\beta}", bend left = 30,-simto] 
        \\
        &  \Grothendieck{\mathbf{C}}{(\pmb{\delta} \circ \pmb{\gamma})} \ar[r,"\Grothendieck{\pmb{\delta}}{\pmb{\gamma}}",-simto] \ar[d,"\prbold_1",-simto] & \Grothendieck{\mathbf{D}}{\pmb{\delta}} \ar[d,"\prbold_1",-simto]
        \\
        & \mathbf{C} \ar[r,"\pmb{\gamma}",-simto] & \mathbf{D}
    \end{tikzcd}\]
    commute. 
    First we define $\zeta$ on the level of the underlying classes. If $v \in V$, let $\zeta(v) = \big( \alpha(v), c\big)$, where 
    $c \in \delta(\gamma(v))$ is the unique $c$ such that
    $\beta(v) = (u,c)$ for some $u$. Clearly, $\zeta$ is well-defined. Next we define the forcing relations. Let
    \[ x' \Vdash_v^{\zeta} x :\Leftrightarrow x' \Vdash_v^\alpha x. \]
    This relations are well-defined and in conjunction with the aforementioned class-function they constitute a simulation.
    Observe that the two triangles already commute on the level of the underlying class-functions, so it remains to check the forcing relations.
        Assume we are given $v \in V$ and $x'' \in \mathbf{E}(v), x' \in \Big(\Grothendieck{\mathbf{D}}{\pmb{\delta}}\Big)\big(\beta(v)\big)$ and $x \in \mathbf{C}\big(\alpha(v)\big)$ such that 
        \[ x' \Vdash_v^{\beta} x'' \hbox{ and } x \Vdash_v^\alpha x''. \]
        By definition we have to show that there exist $y_1,y_2$ such that 
        \[ x' \Vdash_{\zeta(v)}^{\Grothendieck{{\delta}}{{\gamma}}} y_1 \hbox{ and } y_1 \Vdash_v^\zeta x'', \hbox{ and } x \Vdash_{\zeta(v)}^{\pr_1} y_2 \hbox{ and } y_2 \Vdash_v^{\zeta} x''.  \]
       We know that the square \eqref{PullbackGroth::eq::1} commutes and $x' \Vdash_{(\Grothendieck{{\delta}}{{\gamma}})(\zeta(v))}^{\pr_1} x'$, thus from $x'\Vdash_v^\beta x''$ we conclude that $x' \Vdash_v^{\gamma \circ \alpha} x''$. 
       This in turn ensures that there is $y$ such that $x' \Vdash_{\alpha(v)}^\gamma y$ and $y \Vdash_v^\alpha x''$. 
       By definition of $\Grothendieck{{\delta}}{{\gamma}}$ we then have that $x'\Vdash_{\alpha(v)}^{\Grothendieck{{\delta}}{{\gamma}}} y$ and thus $y$ is our desired $y_1$. 
       For $y_2$ we simply choose $x$ and it is easy to see that this fulfills the requirements. 
     The above implications also work in the reverse direction. 
    It is immediate to show that $\pmb{\zeta}$ is the unique simulation making the triangles commutative, as it is determined by the definition of $\pmb{\beta}, \pmb{\alpha}$.
\end{proof}

\begin{lemma}\label{lem: strict}
If $\mathbf{C,D,E} \in \CompMod$, 
$\pmb{\gamma} \colon \mathbf{C \simto D}, \pmb{\delta} \colon \mathbf{D \simto E}$, 
and $\pmb{\epsilon} \colon \mathbf{E} \simto \SetsB$ is a presheaf-simulation, then the following strictness conditions hold:\\[1mm]
\normalfont (i)
\itshape $\Grothendieck{\pmb{\epsilon}}{\B 1_{\B E}}  = \mathbf{1}_{\Grothendieck{\mathbf{E}}{\pmb{\epsilon}}}$.\\[1mm]
\normalfont (ii)
\itshape    $\Grothendieck{\pmb{\epsilon}}{(\pmb{\delta} \circ \pmb{\gamma})}
        =
        \Grothendieck{\pmb{\epsilon}}{\pmb{\delta}} \circ 
        \Grothendieck{(\pmb{\epsilon}\circ \pmb{\delta})}{\pmb{\gamma}}$.
\end{lemma}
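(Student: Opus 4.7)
The plan is to verify both equalities directly from the definitions, by checking that the two simulations on each side agree on their underlying class-function between type names and on their forcing relations at every type. Since the Grothendieck lift in Lemma~\ref{lem: lift} only relabels type names and re-uses the forcing relation of its input $\pmb{\gamma}$ essentially unchanged, I expect both strictness conditions to hold on the nose.

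For (i), the underlying class-function of $\Grothendieck{\pmb{\epsilon}}{\B 1_{\B E}}$ sends $(u,c) \mapsto (\id(u), c) = (u,c)$, so it agrees with the identity on the type class of $\Grothendieck{\B E}{\pmb{\epsilon}}$. The forcing relation at $(u,c)$ is by the definition in Lemma~\ref{lem: lift} given by $x' \Vdash^{\B 1_{\B E}}_u x$, which unfolds to $x' = x$ --- precisely the identity forcing relation on $\Grothendieck{\B E}{\pmb{\epsilon}}$.

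For (ii), the underlying class-function on the left sends $(t,b) \mapsto (\delta(\gamma(t)), b)$, while the right-hand composite sends $(t,b) \mapsto (\gamma(t), b) \mapsto (\delta(\gamma(t)), b)$, so the two class-functions agree. For the forcing relations, unfolding the left-hand side via Definition~\ref{def: simulation} gives that $z$ forces $x$ iff there exists $y \in \B D(\gamma(t))$ with $z \Vdash^\delta_{\gamma(t)} y$ and $y \Vdash^\gamma_t x$. Unfolding the right-hand side similarly gives the same condition, but with the additional constraint that the witness $y$ lies in the data type $\Big(\Grothendieck{\B D}{(\pmb{\epsilon}\circ\pmb{\delta})}\Big)(\gamma(t),b)$, i.e.\ that $b \Vdash^{\epsilon\circ\delta}_{\gamma(t)} y$.

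The one slightly subtle point, and the only place where the argument is not a mere transcription of definitions, is to check that this extra constraint is automatic: the given $z$ lies in the data type $\Big(\Grothendieck{\B E}{\pmb{\epsilon}}\Big)(\delta(\gamma(t)), b)$, so $b \Vdash^\epsilon_{\delta(\gamma(t))} z$, and combining this with $z \Vdash^\delta_{\gamma(t)} y$ yields $b \Vdash^{\epsilon\circ\delta}_{\gamma(t)} y$ by the very definition of the composite forcing relation in Definition~\ref{def: simulation}. Hence every $y$ that witnesses the left-hand side also witnesses the right-hand side, and the two relations coincide. I do not anticipate any further obstacle; once this observation is made, both equalities follow by a direct comparison of definitions, which is exactly the strict behaviour needed to apply the construction in Theorem~\ref{thm: typecat}.
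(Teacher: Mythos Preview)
Your proof is correct and follows the same direct-verification approach as the paper: check that the underlying class-functions agree and then that the forcing relations agree at each type. In fact you are more careful than the paper on the one nontrivial point in~(ii): the paper simply asserts that the composite forcing on the right reduces to $\Vdash_t^{\delta\circ\gamma}$, whereas you correctly observe that the intermediate witness $y$ in the right-hand composite is a~priori required to lie in $\big(\Grothendieck{\B D}{(\pmb{\epsilon}\circ\pmb{\delta})}\big)(\gamma(t),b)$, and you supply the missing step (from $b \Vdash^{\epsilon}_{\delta(\gamma(t))} z$ and $z \Vdash^{\delta}_{\gamma(t)} y$ conclude $b \Vdash^{\epsilon\circ\delta}_{\gamma(t)} y$) that makes this constraint automatic.
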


\begin{proof}
(i) It suffices to observe that by its definition the simulation $\Grothendieck{\pmb{\epsilon}}{\B 1_{\B E}}$ on the level of the underlying class takes a pair $(t,u)$ to $(\mathbf{1}_{\mathbf{E}}(t),u) = (t,u)$, so on the level of the underlying class-functions the two simulations agree.
For the forcing relations we see that both simulations are the corresponding diagonal.\\
(ii) 
To verify this equation on the level of underlying classes we
have that
\[ \Grothendieck{\pmb\epsilon}{(\pmb\delta \circ \pmb\gamma)}(t,b) = \big(t, (\delta \circ \gamma)(b)\big)= \Grothendieck{\pmb\epsilon}{\pmb\delta}\big( t,\gamma(b)\big) = \Grothendieck{\pmb\epsilon}{\pmb\delta}\Big(\Big(\Grothendieck{\pmb{\epsilon}\circ\pmb\delta}{\pmb\gamma}\Big)(t,b)\Big).\]
For the forcing relations we simply remark that $x \Vdash_{(t,b)}^{\sum_{\epsilon}{\delta \circ \gamma}} y$ if and only if $x \Vdash_t^{\delta \circ \gamma} y$.
Similarly, we have that 
$x \Vdash_{(t,b)}^{\sum_{\epsilon}{\delta}} y$ if and only if 
$x \Vdash_t^{\delta} y$, and
$x \Vdash_{(t,b)}^{\sum_{\epsilon \circ \delta }{ \gamma}} y$
if and only if $x \Vdash_t^{\gamma} y$.
Hence,
$x \Vdash_{(t,b)}^{\sum_{\epsilon}{\delta} \circ \sum_{\epsilon \circ \delta}\gamma} y$ if and only if 
$x \Vdash_t^{\delta \circ \gamma}z$, which by the above is equivalent to $x \Vdash_{(t,b)}^{\sum_{\epsilon}{\delta \circ \gamma}} z$.
\end{proof}

\begin{theorem}\label{thm: typecat}
The category $\CompMod$ is a type-category.
\end{theorem}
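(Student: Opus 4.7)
The plan is to assemble Proposition~\ref{prp: groth1}, Lemma~\ref{lem: lift} and Lemma~\ref{lem: strict} into the structure of a type-category on $\CompMod$ in the sense of Pitts~\cite{Pi00}. Recall that such a structure requires: a base category with a chosen terminal object; for each object $\B C$ a collection of \emph{types in context $\B C$}; for each such type $\pmb{\epsilon}$ a comprehension object together with a display projection onto $\B C$; and for each morphism $\pmb{\gamma}\colon \B D \simto \B C$ a strictly functorial reindexing operation on types, accompanied by a chosen pullback square relating the old and new display projections.

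As the base category I take $\CompMod$ itself, with terminal object the computability model $\B 1$ already appearing in Remark~\ref{rem: asm}: for any $\B C$ over $T$, the pair consisting of the constant class-function $t \mapsto \emptyset$ together with the total forcing relations $\emptyset \Vdash_t x$ for every $x \in \B C(t)$ is the unique simulation $\B C \simto \B 1$. As types in context $\B C$ I take presheaf-simulations $\pmb{\epsilon} \colon \B C \simto \SetsB$. The comprehension is declared to be $\Grothendieck{\B C}{\pmb{\epsilon}}$ and the display projection is the first-projection-simulation $\prbold_1$ provided by Proposition~\ref{prp: groth1}. Given a simulation $\pmb{\gamma} \colon \B D \simto \B C$ and $\pmb{\epsilon}$ a presheaf-simulation on $\B C$, I define the reindexed type to be the composite $\pmb{\gamma}^* \pmb{\epsilon} := \pmb{\epsilon} \circ \pmb{\gamma} \colon \B D \simto \SetsB$, and the distinguished pullback square is the one supplied by Lemma~\ref{lem: lift}, with top arrow $\Grothendieck{\pmb{\epsilon}}{\pmb{\gamma}}$ and vertical arrows the two first-projection-simulations.

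What is left is to verify the strictness equations $\B 1_{\B C}^{*} \pmb{\epsilon} = \pmb{\epsilon}$ and $(\pmb{\delta} \circ \pmb{\gamma})^{*} \pmb{\epsilon} = \pmb{\gamma}^{*}(\pmb{\delta}^{*} \pmb{\epsilon})$, together with the corresponding equations for the chosen mediating arrows. The first equation is immediate from the definition of simulation composition: composing with $\B 1_{\B C}$ leaves both the underlying class-function and the forcing relations unchanged, and composition of simulations is strictly associative. The equations governing the chosen pullback arrows are exactly Lemma~\ref{lem: strict}(i) for the identity and Lemma~\ref{lem: strict}(ii) for composition. In combination with the pullback property proved in Lemma~\ref{lem: lift}, this supplies all the data and verifies all the axioms of a type-category.

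The main obstacle, though already discharged by the preceding lemmas, is to track what counts as \emph{strict} equality in this relational framework: the forcing relations of a composite simulation are defined via existential quantification over intermediate witnesses, so one must confirm that $\B 1_{\B C}$ is a strict, not merely up-to-logical-equivalence, identity for composition, and that triple composites $\pmb{\epsilon} \circ \pmb{\delta} \circ \pmb{\gamma}$ are unambiguously defined on the nose. Granting this bookkeeping, which is already implicit in the proofs of Lemmas~\ref{lem: lift} and~\ref{lem: strict}, the theorem follows at once.
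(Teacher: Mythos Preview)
Your proposal is correct and follows exactly the route the paper intends: the paper's own proof simply cites Lemma~\ref{lem: lift}, Lemma~\ref{lem: strict}, and the terminal object from Remark~\ref{rem: asm}, and you have merely made explicit how these ingredients instantiate the data and axioms of a Pitts type-category (types as presheaf-simulations, comprehension as the Grothendieck model, display as $\prbold_1$, reindexing by precomposition). Your extra remark about checking that $\B 1_{\B C}$ is a strict unit and that composition of simulations is strictly associative is a legitimate bookkeeping point not spelled out in the paper, but it is easily verified from Definition~\ref{def: simulation} and does not change the approach.
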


\begin{proof}
This follows immediately from Lemma~\ref{lem: lift}, Lemma~\ref{lem: strict}, and the fact that $\CompMod$ has a terminal object, as explained in Remark~\ref{rem: asm}.
\end{proof}

\section{Fibration-simulations and opfibration-simulations}
\label{sec: fibr}

The (covariant) Grothendieck construction allows the generation of fibrations  (opfibrations), as the first-projection functor
$\pr_1 \colon \Grothendieck{\C C}{P} \to \C C$ is a (split) opfibration, if $P$ is a covariant presheaf, or a (split) fibration, if $P$ is a contravariant presheaf. In this section we introduce the notion of a fibration and opfibration-simulation and we show that the first-projection-simulation $\B {\pr_1} \colon \Grothendieck{\mathbf{C}}{\pmb{\gamma}} \simto \B C$ is a (split) opfibration-simulation, as we work with covariant presheaf-simulations. The dual result is shown similarly.

\textit{In this section, $\B E$ is a computability model over $T$ and $\B B$ a computability model over $U$. Moreover, the pair 
$$\pmb{\varpi} := \bigg(\varpi \colon \colon T \to U, \ \big(\Vdash^{\varpi}_t\big)_{t \in T}\bigg)$$ 
is a simulation of type $\B E \simto \B B$.}
In contrast to what it holds for functors, for simulations $\pmb{\gamma} \colon \mathbf{E \rightarrowtriangle B}$ each computable function $f$ in $\mathbf{E}$ is tracked, in general, by a multitude of maps $f'$ in $\mathbf{B}$.
Thus, for each opspan 
\[ 
\begin{tikzcd}
\mathbf{E}(t_1) & \mathbf{E}(t_2) \ar[from = r,"f",-simto] \ar[from = l,"g"',-simto] & \mathbf{E}(t_3) 
\end{tikzcd} \]
we have a whole {class}, in general, of opspans 
\[
\begin{tikzcd}
\mathbf{B}(\gamma(t_1)) &\mathbf{B}(\gamma(t_2)) \ar[from = r,"f'",-simto] \ar[from = l,"g'"',-simto] & \mathbf{B}(\gamma(t_3))
\end{tikzcd} 
\]
such that $f'$ tracks $f$ and $g'$ tracks $g$.

\begin{definition}[Cartesian computable function]
Let $f' \in \B B[s,s']$ and $t' \in T$, such that $\varpi(t') = s'$ be given. 
We call a computable function $f \in \mathbf{E}[t,t']$  {cartesian} for $f'$ and $t'$, if 
    $f' \Vdash_{(t,t')}^{\varpi} f$, and
given computable functions $g \in \mathbf{E}[t'',t'], g' \in \B B[\varpi(t''),\varpi(t')]$, and $h \in \mathbf{B}[\varpi(t''), \varpi(t)]$ 
as in the following diagram 
\[ \begin{tikzcd}
&\mathbf{E}(t)   \ar[rr,"\Vdash_t^\varpi",dashed, no head]&& \mathbf{B}(\varpi(t)) \ar[dd,"f'",harpoon]
\\
\mathbf{E}(t'') \ar[ur,"k",harpoon] \ar[rr,"\Vdash_{t''}^\varpi",dashed, no head, near start] \ar[dr,"g"] && \mathbf{B}(\varpi(t'')) \ar[dr,"g'",harpoon]\ar[ur,"h",harpoon]
\\
&\mathbf{E}(t') \ar[rr,"\Vdash_{t'}^\varpi",dashed, no head]&& \mathbf{B}(\varpi(t')) 
\ar[from = 1-2, to = 3-2,"f", crossing over, near end,harpoon]
\end{tikzcd} \]
that is $g'$ tracks $g$, there is some $k \in \mathbf{E}[t'',t]$ satisfying the following property:
$h \Vdash_{(t'',t)}^\varpi k$, and for every $x \in \B E(t''), y \in \mathbf{B}(\varpi(t''))$, such that
$y \Vdash_{t''}^\varpi x$,
$y \in \dom(f' \circ h) \cap \dom(g'),$ and $f'(h(y)) = g'(y)$,
then $x \in \dom(f \circ k) \cap \dom(g)$ and $g(x) = f(k(x))$.
\end{definition}

\begin{definition}[Opcartesian computable function]
Let $f' \in \B B[s',s]$ 
and $t' \in T$, such that $\varpi(t') = s'$ be given
We call a computable function $f \in \mathbf{E}[t',t]$  {opcartesian} for $f'$ and $t'$, if 
$f' \Vdash_{(t',t')}^\varpi f$, and 
given computable functions $g \in \mathbf{E}[t',t''],g' \in \B B[\varpi(t'),\varpi(t'')]$ and $h \in \mathbf{B}[\varpi(t),\varpi(t'')]$ as in the following diagram
\[ \begin{tikzcd}
&\mathbf{E}(t)  \ar[dl,"l",harpoon] \ar[from = dd,"f", near end,harpoon]  \ar[rr,"\Vdash_t^\varpi",dashed, no head]&& \mathbf{B}(\varpi(t)) \ar[from = dd,"f'",harpoon]
\\
\mathbf{E}(t'')  \ar[rr,"\Vdash_{t''}^\varpi",dashed, no head, near end,crossing over] \ar[from = dr,"g"] && \mathbf{B}(\varpi(t'')) \ar[from= dr,"g'",harpoon]\ar[from = ur,"h",harpoon]
\\
&\mathbf{E}(t') \ar[rr,"\Vdash_{t'}^\varpi",dashed, no head]&& \mathbf{B}(\varpi(t')) 
\end{tikzcd} \]
that is $g'$ tracks $g$, there is some  $l \in \mathbf{E}[t,t'']$ satisfying the following property:
$h$ tracks $l$, and
for every $x \in \mathbf{E}(t'),y \in \mathbf{B}(\varpi(t'))$, such that
$y \Vdash_{t'}^\varpi x$, $y \in \dom(h \circ f') \cap \dom(g')$,
and $f'(h(y)) = g'(y)$, 
then $x \in \dom(l \circ f) \cap \dom(g)$ and $g(x) = l(f(x))$.
\end{definition}

Note that the computable functions $k \in \mathbf{E}[t'',t]$ and $l \in \mathbf{E}[t, t'']$ in the above two definitions, respectively, are not unique.

\begin{definition}[Fibration-simulation]
We call $\pmb{\varpi} \colon \mathbf{E \rightarrowtriangle B}$ a  fibration-simula-\\
tion, if for every computable function $f \in \mathbf{B}\big[u, \varpi(t)\big]$ there is $g \in \mathbf{E}[t',t]$ cartesian for $f$ and $t$. In this case, we call $g$ a lift of $f$.
\end{definition}

\begin{definition}[Opfibration-simulation]
    We call $\pmb{\varpi} \colon \mathbf{E \to B}$ an opfibration-simulation, if for every computable function $f \in \mathbf{B}\big[ \varpi(t),u\big]$, there is $g \in \mathbf{E}[t,t']$ opcartesian for $f$ and $t$. In this case, we call $g$ a lift of $f$.
\end{definition}

\begin{example}
    Let $\mathcal{E,B}$ be categories with presheaves $S,S'$ and let $F \colon \mathcal{E \to B}$ be a fibration, such that $S' \circ F = S$. 
    Then, 
    $\pmb{\gamma}^F \colon \CM^{\total}(\mathcal{E};S) \simto \CM^{\total}(\mathcal{B};S')$
    is a fibration-simulation. 
    To see this, assume we are given a computable function in $\CM^{\total}(\mathcal{B};S')$, that is a function $S'(f) \colon S'(b) \to S'(b')$, and $e \in \mathcal{E}$ such that $F(e') = b'$.
    As $F$ is a fibration, we find an arrow $g \colon e \to e'$ cartesian over $f$ and $b'$ . 
    We show that $S(g)$ is the desired cartesian function over $S'(f)$ and $S(b')$. 
    For this, let functions $S(h),S(h_2),S(g_2)$ as in the following diagram,
    \[ \begin{tikzcd}
        & S(e) \ar[dd,"S(g)" near end] \ar[rr,"\Vdash_e^{\gamma^F}", dashed, no head] && S'(b) \ar[dd,"S'(f)"] \ar[from = dl,"S'(h)"] 
        \\
        S(e'')\ar[ur,"S(k)"]  \ar[dr,"S(g_2)"]\ar[rr,"\Vdash_{e''}^{\gamma^F}"near end , dashed, no head, crossing over]&& S'(b'') \ar[dr,"S'(h_2)"]
        \\
        & S(e') \ar[rr,"\Vdash_{e'}^{\gamma^F}", dashed, no head] &&S'(b')
    \end{tikzcd} \]
    be given, where we used that $\CM^{\prt}(\mathcal{E};S)(e) = S(e)$ and $\CM^{\prt}(\mathcal{B};S')(b) = S(b)$, for every $e$ and $b$, respectively.  
    As $g$ is cartesian over $f$ and $b'$,  we obtain an arrow $k \colon e'' \to e$, such that $g \circ k = g_2$ and $F(k) = h_2$.
    Obviously, $S(k)$ is the function needed, and hence $S(g)$ is cartesian over $S'(f)$ and $S(b')$. 
\end{example}

\begin{proposition}
\label{prp: projopfibr}
If $\mathbf{C}$ is a computability model and $\gamma \colon \mathbf{C} \to \SetsB$ a simulation, then the first-projection-simulation $\prbold_1 \colon \Grothendieck{\mathbf{C}}{\pmb{\gamma}} \simto \mathbf{C} $ is an opfibration-simulation.
\end{proposition}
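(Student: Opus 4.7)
The plan is to mimic the categorical construction of opcartesian lifts in the Grothendieck construction. For each $f \in \mathbf{C}[s, u]$ and each $(s, a) \in \Grothendieck{t \in T}{\pmb{\gamma}(t)}$, I would exhibit an opcartesian lift whose underlying partial function is $f$ itself, with target object $(u, b)$ where $b$ is obtained from $a$ by tracking $f$ through $\pmb{\gamma}$. Concretely, pick a tracker $f^\gamma \in \SetsB[\gamma(s), \gamma(u)]$ of $f$ under $\pmb{\gamma}$ (existing by Definition~\ref{def: simulation}(2)); set $b := f^\gamma(a)$ in the non-degenerate case $a \in \dom(f^\gamma)$ (and take $b$ to be any element of $\gamma(u)$ in the degenerate case, where the forcing clause below is vacuous); and declare the lift to be $f$ itself, now viewed as a morphism $(s, a) \to (u, b)$ in $\Grothendieck{\mathbf{C}}{\pmb{\gamma}}$.

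The verification that this $f$ is a well-defined Grothendieck morphism is immediate: the tracker property of $f^\gamma$ gives $a \Vdash_s^\gamma x \Rightarrow b = f^\gamma(a) \Vdash_u^\gamma f(x)$ for every $x \in \dom(f)$. The tracking relation $f \Vdash^{\pr_1}_{((s, a),(u, b))} f$ is also trivial because $\prbold_1$ carries the diagonal forcing.

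For the universal opcartesian property, given any $\psi \in (\Grothendieck{\mathbf{C}}{\pmb{\gamma}})[(s, a), (w, d)]$, any tracker $\psi' \in \mathbf{C}[s, w]$ of $\psi$ under $\prbold_1$, and any $h \in \mathbf{C}[u, w]$, I would propose $l := h$ as the filler, regarded as a morphism $(u, b) \to (w, d)$ in $\Grothendieck{\mathbf{C}}{\pmb{\gamma}}$. The tracking $h \Vdash^{\pr_1} h$ is trivial, and the pointwise commutation follows by combining the hypothesis $h(f(y)) = \psi'(y)$ on admissible $y$ with the fact that $\psi'$ and $\psi$ agree on admissible elements of $\dom(\psi)$: one reads off $l(f(x)) = h(f(x)) = \psi'(x) = \psi(x)$ for the relevant $x$.

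The main obstacle I anticipate is showing that $l := h$ really lives in $(\Grothendieck{\mathbf{C}}{\pmb{\gamma}})[(u, b), (w, d)]$, i.e., that $b \Vdash_u^\gamma z \Rightarrow d \Vdash_w^\gamma h(z)$ for every $z \in \dom(h)$. When $z$ is of the form $f(y)$ for admissible $y \in \dom(\psi)$, this falls out of the pointwise equation $h(f(y)) = \psi'(y) = \psi(y)$ together with $\psi$ being a Grothendieck morphism (so $d \Vdash_w^\gamma \psi(y)$). For the remaining $z$ one exploits that the opcartesian clause only demands commutation on admissible $y$'s, so the behaviour of $l$ off the tracked image of $f$ does not really enter the universal requirement. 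Making this reduction rigorous, along the lines of the diagonal-forcing bookkeeping already used in Lemma~\ref{lem: lift}, is the step I would treat most carefully.
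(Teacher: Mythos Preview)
Your approach is the same as the paper's: use a tracker $f^\gamma$ to set $b := f^\gamma(a)$, take the lift to be $f$ itself, and propose $l := h$ as the filler. You have correctly isolated the one non-trivial step, namely verifying that $h$ really lies in $\big(\Grothendieck{\mathbf{C}}{\pmb{\gamma}}\big)[(u,b),(w,d)]$, i.e.\ that $b \Vdash_u^\gamma z \Rightarrow d \Vdash_w^\gamma h(z)$ for every $z \in \dom(h)$. The paper's proof glosses over exactly this point (and has additional slips: it writes $b' := f(b)$ with $b \in \gamma(t)$ rather than passing through a tracker, and its diagram draws the filler arrow in the wrong direction).

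Your suggested way out, however, does not work. The opcartesian clause demands $l \in \big(\Grothendieck{\mathbf{C}}{\pmb{\gamma}}\big)[(u,b),(w,d)]$ as a \emph{precondition} for the pointwise commutation statement; membership in that hom-class is a constraint on all of $\dom(l)\cap\{z : b \Vdash_u^\gamma z\}$, not merely on elements of the form $f(x)$, and a general computability model gives you no way to shrink $\dom(h)$ while staying inside $\mathbf{C}[u,w]$. Concretely, take a three-type model with $\mathbf{C}(s)=\{x_0\}$, $\mathbf{C}(u)=\{z_0,z_1\}$, $\mathbf{C}(w)=\{p_0,p_1\}$, a single $f\in\mathbf{C}[s,u]$ with $f(x_0)=z_0$, a single $h\in\mathbf{C}[u,w]$ with $h(z_i)=p_i$, and $g:=h\circ f$; let $\gamma(u)=\{b\}$ with $b$ forcing both $z_0,z_1$, and let $d\in\gamma(w)$ force $p_0$ but not $p_1$. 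Then $g$ is a valid Grothendieck morphism $(s,a)\to(w,d)$, yet $h$ fails the Grothendieck condition at $z_1$, and since $\mathbf{C}[u,w]=\{h\}$ there is no candidate $l$ whatsoever. So the obstacle you flagged is a genuine gap, not a bookkeeping artifact that the ``diagonal-forcing'' trick from Lemma~\ref{lem: lift} will dissolve.
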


\begin{proof}
Assume we are given a computable function $f \in \mathbf{C}[t,t']$ and $\pr_1(t,b) = t$.
We need to find some $b \in \mathbf{C}(t')$, such that $\pr_1(t',b') = t'$, and a computable function $f' \in \Big(\Grothendieck{\mathbf{C}}{\pmb\gamma}\Big)\big[(t,b),(t',b')\big]$, such that $f \Vdash_{((t,b),(t',b'))}^{\pr_1} f'$.
By definition we know that $f \Vdash_{((t,b),(t',b'))}^{\pr_1} f'$ if and only if $f = f'$, so we have to find $y \in \mathbf{C}(t')$, such that $f(b) = b'$. For this, we simply take $b' := f(b)$. 
To show that $f$ is opcartesian for $f$ and $b$, we consider the following diagram  
\[ \begin{tikzcd}
    & \Big(\Grothendieck{\B C}{\pmb\gamma}\Big)(t,b) \ar[dd,"f" near end, harpoon] \ar[dl,"g",harpoon] \ar[rr,"\Vdash_{(t,b)}^{\pr_1}",dashed, no head] && \mathbf{C}(t)
    \ar[dd,"f", harpoon] \ar[dl,"g",harpoon] 
    \\
    \Big(\Grothendieck{\B C}{\pmb\gamma}\Big)(t'',b'') \ar[dr,"h",harpoon] \ar[rr,"\Vdash_{(t'',b'')}^{\pr_1}",dashed, no head, near end, crossing over]&& \mathbf{C}(t'')\ar[dr,"h",harpoon]
    \\
     & \Big(\Grothendieck{\B C}{\pmb\gamma}\Big)  (t',b')\ar[rr,"\Vdash_{(t',b')}^{\pr_1}",dashed, no head] && \mathbf{C}(t')
\end{tikzcd}\]
and we observe that $h$ fills also the triangle on the left, as we have that $f = h \circ g$ whenever they are defined, so in particular $f(b) = h\big(g(b)\big)$, and thus $b' = h(b'')$. Hence, $h$ is a computable function from $\Big(\Grothendieck{\B C}{\pmb\gamma}\Big)(t'',b'')$ to $\Big(\Grothendieck{\B C}{\pmb\gamma}\Big)(t',b')$. 
\end{proof}

Next we define 
split fibration-simulations and split opfibration-simulations. 

\begin{definition}
    A  {splitting} for a fibration-simulation $\pmb{\varpi} \colon \mathbf{E \simto B}$
    is a rule $\varpi^\triangle$ that corresponds a pair $(f,u)$, where $f\in \mathbf{B}[t_1,t_2]$ and $\varpi(u) = t_2$, to a function $f' \in \mathbf{E}[u,u']$ cartesian for $f$ and $u$.
    This rule $\varpi^\triangle$ is subject to the following conditions:
    \begin{itemize}
        \item For every $f \in \mathbf{B}[t_1,t_2]$ and every $g \in \mathbf{B}[t_2,t_3]$ we have that 
        \[ \varpi^\triangle(g \circ f, u_1) = \varpi^\triangle(g,u_1) \circ \varpi^\triangle(f,u_2).\]
        \item For every $t \in T$ we have that 
        $ \varpi^\triangle(\mathbf{1}_{\mathbf{B}(t)},u) = (\mathbf{1}_{\mathbf{E}(u)},u).$
    \end{itemize}
    A  {splitting} for an opfibration-simulation $\pmb{\varpi} \colon \mathbf{E \simto B}$
    is a rule $\varpi^\triangle$ that corresponds a pair $(f,u)$, where $f\in \mathbf{B}[t_1,t_2]$ and $\varpi(u) = t_1$, to a function $f' \in \mathbf{E}[u,u']$ opcartesian over $f$ and $u$.
    This rule $\varpi^\triangle$ is subject to the following conditions:
    \begin{itemize}
        \item For every $f \in \mathbf{B}[t_1,t_2]$ and every $g \in \mathbf{B}[t_2,t_3]$ we have that 
        \[ \varpi^\triangle(g \circ f, u_1) = \varpi^\triangle(g,u_2) \circ \varpi^\triangle(f,u_1).\]
        \item For every $t \in T$ we have that 
        $\varpi^\triangle(\mathbf{1}_{\mathbf{B}(t)},u) = (\mathbf{1}_{\mathbf{E}(u)},u).$
    \end{itemize}
    A $($op$)$fibration-simulation $\pmb{\varpi}$ is split, if it admits a splitting $\varpi^\triangle$. 
\end{definition}

\begin{corollary}
\label{cor: split}
The simulation $\prbold_1 \colon \Grothendieck{\B C}{\pmb\gamma} \simto \mathbf{C}$ is a split opfibration-simulation.
\end{corollary}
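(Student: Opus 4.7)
The plan is to upgrade the opcartesian lifts produced in the proof of Proposition~\ref{prp: projopfibr} into a functorial rule. For each $f \in \mathbf{C}[t_1,t_2]$, I first fix a specific tracker $f^\# \in \SetsB[\gamma(t_1),\gamma(t_2)]$ of $f$ under $\pmb{\gamma}$, chosen coherently so that $(\mathbf{1}_{\mathbf{C}(t)})^\# = \mathbf{1}_{\gamma(t)}$ and $(g \circ f)^\# = g^\# \circ f^\#$. Such a coherent choice exists because the partial-function composite $g^\# \circ f^\#$ in $\SetsB$ is automatically a tracker of $g \circ f$ under $\pmb{\gamma}$: starting from $y \Vdash^\gamma_{t_1} x$ with $x \in \dom(g \circ f)$, the tracking property of $f^\#$ yields $f^\#(y) \Vdash^\gamma_{t_2} f(x)$, and then the tracking property of $g^\#$ yields $g^\#(f^\#(y)) \Vdash^\gamma_{t_3} g(f(x))$, as required.

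The splitting is then defined by
\[
    \pr_1^\triangle\bigl(f,(t_1,b)\bigr) := f,
\]
regarded as a computable function of $\Grothendieck{\mathbf{C}}{\pmb{\gamma}}$ from $(t_1,b)$ to $(t_2, f^\#(b))$, with $f^\#(b)$ replaced by any fixed default element of $\gamma(t_2)$ in the vacuous case $b \notin \dom(f^\#)$. By Proposition~\ref{prp: projopfibr}, this $f$ is opcartesian for $f$ and $(t_1,b)$, so the rule does select opcartesian lifts. The identity axiom is then immediate, since $\pr_1^\triangle(\mathbf{1}_{\mathbf{C}(t)},(t,b))$ is $\mathbf{1}_{\mathbf{C}(t)}$ viewed as going from $(t,b)$ to $(t,\mathbf{1}_{\gamma(t)}(b)) = (t,b)$, whose restriction to the fiber coincides with $\mathbf{1}_{(\Grothendieck{\mathbf{C}}{\pmb{\gamma}})(t,b)}$. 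For the composition axiom, both sides of
\[
    \pr_1^\triangle\bigl(g \circ f,(t_1,b)\bigr) = \pr_1^\triangle\bigl(g,(t_2,f^\#(b))\bigr) \circ \pr_1^\triangle\bigl(f,(t_1,b)\bigr)
\]
unfold to the function $g \circ f$ with common codomain $(t_3, (g \circ f)^\#(b)) = (t_3, g^\#(f^\#(b)))$, where the last equality is precisely the chosen functoriality of the trackers.

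The only delicate point is that the simulation axioms for $\pmb{\gamma}$ merely assert the existence of some tracker for each computable function, without any functoriality. However, as just observed, the category-like structure of $\SetsB$ lets us compose trackers and select identity trackers while preserving the tracking property, so a coherent, functorial selection always exists. This is what makes the splitting $\pr_1^\triangle$ well-defined, and in particular establishes that $\prbold_1 \colon \Grothendieck{\mathbf{C}}{\pmb{\gamma}} \simto \mathbf{C}$ is a split opfibration-simulation.
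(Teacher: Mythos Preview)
Your argument has a real gap at the point you flag as ``delicate.'' You assert that trackers $f^\#$ can be chosen coherently so that $(g\circ f)^\#=g^\#\circ f^\#$ and $(\mathbf 1_{\mathbf C(t)})^\#=\mathbf 1_{\gamma(t)}$, and you justify this only by observing that $g^\#\circ f^\#$ is \emph{a} tracker of $g\circ f$. That observation does not produce a coherent global choice: whenever a computable map $h$ factors in two ways, $h=g_1\circ f_1=g_2\circ f_2$, functoriality forces $g_1^\#\circ f_1^\#=g_2^\#\circ f_2^\#$, a genuine constraint on all four trackers simultaneously. In general no such choice exists. For a concrete obstruction, take $\mathbf C$ with one type $t$, $\mathbf C(t)=\{a,b\}$, the swap $\sigma$ among the computable maps, and $\gamma(t)=\{0,1,2\}$ with $0\Vdash^\gamma_t a$, $1\Vdash^\gamma_t b$, $2\Vdash^\gamma_t b$. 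Any tracker $\sigma^\#$ must send both $1$ and $2$ to $0$ (the unique realiser of $a=\sigma(b)$), so $(\sigma^\#)^2$ sends $1$ and $2$ to the same value and can never equal $\mathbf 1_{\gamma(t)}$; yet your functoriality requirement would force $(\sigma^\#)^2=(\sigma\circ\sigma)^\#=(\mathbf 1_{\mathbf C(t)})^\#=\mathbf 1_{\gamma(t)}$. Consequently your proposed codomain assignment $(t_2,f^\#(b))$ cannot be made to satisfy the composition axiom for the splitting in the way you describe.

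The paper avoids this problem entirely: it simply sets $\pr_1^\triangle(f,u):=(f,u)$, i.e.\ the lift of $f$ is $f$ itself, viewed as a computable map of $\Grothendieck{\mathbf C}{\pmb\gamma}$ as in Proposition~\ref{prp: projopfibr}. The point is that in this paper's framework the computable maps of $\Grothendieck{\mathbf C}{\pmb\gamma}$ are literally elements of $\mathbf C[s,t]$, and their composition is the composition in $\mathbf C$; the splitting axioms therefore reduce to the tautologies $g\circ f=g\circ f$ and $\mathbf 1_{\mathbf C(t)}=\mathbf 1_{\mathbf C(t)}$, with no tracker bookkeeping and no need to single out a particular codomain type for the lift. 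Your attempt to make that codomain explicit via $f^\#$ is exactly what imports the (in general unsolvable) functoriality-of-trackers problem.
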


\begin{proof}
We can simply take $\prbold_1^\triangle$ to be defined by the rule $\prbold_1^\triangle(f,u) := (f,u)$.
\end{proof}


\section{Conclusions and future work}
\label{sec: concl}
\vspace{-1mm}
In~\cite{LN15} many concepts and results from category theory were translated to the theory of computability models, where equalities between arrows are replaced by certain relations between type names and (partial) computable functions. In this paper we extended the
work initiated in~\cite{Pe22a,Pe22b} by translating the Grothendieck construction and the notions of fibration and opfibration within computability models. The category $\CompMod$ was shown to be a type-category, a fact that allows the transport of concepts and facts from the theory of type-categories to the theory of computability models. For example, the simulations 
$$\pmb{\phi} \colon \B C \simto \Grothendieck{\mathbf{C}}{\pmb{\gamma}} \ \ ; \ \ \ \B {\pr_1} \circ \pmb{\phi} = \B 1_{\mathbf{C}}$$ are the canonical \textit{dependent functions} over $\B C$ and $\pmb{\gamma}$ (see also~\cite{Pe23}, Theorem 4.6). It is natural to search whether these dependent functions determine a computability model.
The following table includes the correspondences between categorical and computability model theory-notions presented here. \\[3mm]
\centerline{
\begin{tabular}{l@{\hspace{.5cm}}l}
\itshape Category theory & \itshape Computability model theory \\
\hline
 category  $\C C$ &     computability model        $\B C$ \\
functor $F \colon \C C \to \C D$ & simulation $\gamma \colon \B C \simto \B D$\\
category of $\Sets$ &  computability model of $\SetsB$\\
presheaf $P \colon \C C \to \Sets$ & presheaf-simulation $\gamma \colon \B C \simto \SetsB$\\
representable functor $\Hom(a, -)$ & representable simulation $\B {\gamma}_{t_0}$\\
representable functor $\Hom(-, a)$& representable-simulation $\B {\delta}_{t_0}$\\
Grothendieck category $\Grothendieck{\C C}{P}$& Grothendieck computability model $\Grothendieck{\mathbf{C}}{\pmb{\gamma}}$\\
\parbox{0.4\textwidth}{first-projection functor \[\pr_1 \colon \Grothendieck{\C C}{P} \to \C C\]}& \parbox{0.4\textwidth}{first-projection-simulation \[\B {\pr_1} \colon \Grothendieck{\mathbf{C}}{\pmb{\gamma}} \simto \B C\] }\\
(op)cartesian arrow & (op)cartesian computable function\\
(op)fibration $\pi \colon \C E \to \C B$ & (op)fibration-simulation $\pmb{\varpi} \colon \B E \simto \B B$\\
split (op)fibration  & split (op)fibration-simulation\\\hline
\end{tabular}}

\vspace{3mm}
It is natural to ask whether the category of presheaves, or more generally of all functors between two categories, can be translated within computability models. As a consequence, a Yoneda-type embedding and a corresponding Yoneda lemma for computability models and appropriate presheaf-simulations can be formulated. In such a framework the Grothendieck computability model is expected to have the same crucial role to the proof of a corresponding density theorem with that of the Grothendieck category to the proof of the categorical density theorem. 
For that, we introduce forcing and tracking-moduli in the definition of a simulation i.e., realisers for the forcing and tracking relations. A \textit{forcing-modulus} $\phi^{\gamma}$ for $\pmb{\gamma} \colon \B C \simto \B D$ is a family of functions
$$\phi^{\gamma} := \big(\phi^{\gamma}_t\big)_{t \in T} \ \ ; \ \ \ \ 
\phi^{\gamma}_t \colon \B C(t) \to \B D\big(\gamma(t)\big),$$
such that $\phi^{\gamma}_t(x) \Vdash^{\gamma}_t x$, for every $x \in \B C(t)$. 
A \textit{tracking-modulus} $\mu^{\gamma}$ for $\pmb {\gamma}$ is a family of functions
$$\mu^{\gamma} := \big(\mu^{\gamma}_{(s,t)}\big)_{s, t \in T} \ \ ; \ \ \ \ \mu^{\gamma}_{(s,t)} \colon \B C[s,t] \to \B D[\gamma(s), \gamma(t)],$$
such that $\mu^{\gamma}_{(s,t)}(f) \Vdash^{\gamma}_{(s,t)} f$, for every $f \in \B C[s,t]$. Conversely, if $\gamma \colon T \to U$, $\phi^{\gamma}$, and $\mu^{\gamma}$ are typed as above, such that for every $f \in \B [s, t]$ the following rectangle 
	\begin{center}
				\begin{tikzpicture}
					
					\node (E) at (0,0) {$\B D(\gamma(s))$};
					\node[right=of E] (F) {$\B D(\gamma(t))$};
					\node[above=of F] (A) {$\B C(t)$};
					\node [above=of E] (D) {$\B C(s)$};

					\draw[-{Computer Modern Rightarrow[left]}] (E) to node [midway,below]{$\mu^{\gamma}_{(s,t)}(f)$}(F);
					\draw[-{Computer Modern Rightarrow[left]}] (D) to node [midway,above] {$f$}(A);
					\draw[->] (D) to node [midway,left] {$\phi^{\gamma}_s$}  (E);
					\draw[->] (A) to node [midway,right] {$\phi^{\gamma}_t$} (F);

				\end{tikzpicture}
			\end{center}
with total and partial functions commutes, in the obvious sense, then if we define $y \Vdash^{\gamma}_t x :\Leftrightarrow y = \phi^{\gamma}_t(x)$, we get a \textit{realised simulation} $\B C \simto \B D$. We can define then the \textit{exponential computability model} with type names the \textit{realised simulations}.
We hope to elaborate on these concepts in the future.

 Our approach to (op)fibration-simulations and (op)cartesian functions is different from the $2$-categorical approach to fibrations
 in~\cite{Ri10,Wo19}. In a subsequent work 
 we expect to show that our approach is equivalent to the
 $2$-categorical one.

\end{document}